\def\R{\mathbb{R}}
\newtheorem{theorem}{Theorem}[section]
\newtheorem{lemma}[theorem]{Lemma}
\theoremstyle{definition}
\def\N{\mathcal{N}}
\def\div{\mathop{\mathrm{div}}\nolimits}
\newcommand{\AM}{\textcolor{black}}
\title{Model reduction of Brownian oscillators: quantification of errors and long-time behaviour}
\author[1]{Matteo Colangeli\thanks{matteo.colangeli1@univaq.it}}
\author[2]{Manh Hong Duong\thanks{h.duong@bham.ac.uk}}
\author[3]{Adrian Muntean\thanks{adrian.muntean@kau.se}}
\affil[1]{Department of Information Engineering, Computer Science and Mathematics,
University of L'Aquila, Italy.}
\affil[2]{School of Mathematics,
University of Birmingham,
UK.}
\affil[3]{Department of Mathematics and Computer Science \& Centre for Societal Risk Research (CSR), Karlstad University, Sweden.}
\begin{document}
\maketitle
\begin{abstract}
A procedure for model reduction of stochastic ordinary differential equations with additive noise was recently introduced in \cite{CDM22}, based on the Invariant Manifold method and on the Fluctuation-Dissipation relation. A general question thus arises as to whether one can rigorously quantify the error entailed by the use of the reduced dynamics in place of the original one. In this work we provide explicit formulae and estimates of the error in terms of the Wasserstein distance, both in the presence or in the absence of a sharp time-scale separation between the variables to be retained or eliminated from the description, as well as in the long-time behaviour.  

\AM{{\bf Keywords:} Model reduction, Wasserstein distance, error estimates, coupled Brownian oscillators, invariant manifold, Fluctuation-Dissipation relation.}
\end{abstract}
\section{Introduction}
\label{sec:sec1}

The notion of scale separation is largely invoked in multiscale modelling and homogeneization methods \AM{(including model reduction and operator splitting techniques)} \cite{givon2004extracting,pavliotis2008multiscale}, and has also found far-reaching applications in different areas of science \AM{and engineering}, \textit{e.g.} in climate dynamics \cite{Ghil20},  biochemical systems \cite{Schnell17}, chemical reaction networks \cite{Kang2013}, \AM{smoldering combustion \cite{Ijioma2014}, and so on}. 
A neat illustration of this \AM{notion} can be traced
in the preface of Haken's seminal book on Synergetics \cite{Haken2004}, where the author writes: ``In large classes of systems that are originally described by \textit{many} variables, the behavior of a system is described and determined by only \textit{few} variables, the \textit{order parameters}. They fix the behavior of the individual parts via the \textit{slaving principle}''. A physical rationale behind the slaving principle amounts to the assumption of decomposition of motions: there exists a short time-scale during which the slow variable does not change significantly, while the fast variable rapidly settles on a value determined by the slow one. The evolution of the latter, in turn, takes place on a much longer scale. 
A specific form of such principle is realized through the method of adiabatic elimination of fast variables, which underlies the derivation of the Smoluchowski equation from the underdamped Langevin equation. A sharp distinction between slow and fast variables is also a prerequisite for application of the Mori-Zwanzing method \cite{Zwanzig} in the derivation of reduced equations from higher dimensional stochastic dynamics, where the Markovian structure of the original process is preserved in the reduced description by stipulating a perfect time-scale separation.
The same guiding principle underpins, in kinetic theory, the Grad moment method \cite{Grad49,colan07b},  and has also been exploited in the derivation of linear hydrodynamics from the Boltzmann equation using the framework of the Invariant Manifold \cite{GorKar05,colan09}. 
The latter method has also been exploited in \cite{CDM22} to characterize the deterministic component of the contracted description in a system of two coupled (underdamped) Brownian harmonic oscillators. The structure of the noise term of the Markovian reduced dynamics, in turn, was determined via the Fluctuation-Dissipation relation. A general question, then, concerns the derivation of a quantitative estimate of the error stemming from the use of the reduced dynamics in place of the original one.
A first attempt, in this direction, was proposed in \cite{CM22}, and it was based on the study of the equilibrium correlation functions in the reduced and in the original processes. A uniform-in-time type of convergence of the correlations evaluated in the two processes was proven to hold in the so-called overdamped limit, where the friction parameter diverges.

In this work we take a step further, and compute explicitly the Wasserstein distance between the laws of the original and reduced processes. This paves the way to explicitly quantify the error inherent to the contracted description.
We focus on two classical models thoroughly studied in statistical physics and molecular dynamics, namely the underdamped Brownian harmonic oscillator and a system of two coupled overdamped Brownian harmonic oscillators. In the more traditional approach based on the slow-fast decomposition of motions, a reduced description can be achieved by passing the parameter to a certain limit, thus establishing a perfect time-scale separation, see e.g. \cite{Zwanzig,Ghil}. In the present work, instead, we derive the reduced dynamics in a regime characterized by a finite time-scale separation, which is controlled, in the two considered models, by either the friction parameter or the coupling parameter. We show that the reduced and original dynamics are exponentially close at any time, and they coincide if we pass the parameter to the corresponding limit. We also prove that the two dynamics have the same equilibrium measure and, furthermore, they exponentially converge to the equilibrium measure with the same rate. This notable property is a direct consequence of the proposed reduction scheme, in particular of the selection of solutions to the invariance equation obtained from the Invariant Manifold method. As a consequence of this, the spectrum of the reduced drift matrix is a subset of the spectrum of the original drift matrix. The models and precise statements of the results are presented in Section \ref{sec:sec3} and Section \ref{sec:sec4}.

The work is structured as follows. In Sec. \ref{sec:sec2} we review the definition of the Wasserstein distance between two probability measures and introduce the basic notation used throughout the manuscript. In Sec. \ref{sec:sec3} we compute our error estimate based on the Wasserstein distance for a Brownian harmonic oscillator, for which the laws of the original and the contracted descriptions are analytically known. In Sec. \ref{sec:sec4} we apply our method to a slightly more involved model, constituted by a pair of coupled overdamped Brownian harmonic oscillators. Conclusions and a final outlook are finally drawn in Sec. \ref{sec:sec5}.

\section{Preliminaries}
\label{sec:sec2}

In this Section we introduce the Wasserstein distance between two probability measures and also fix the notation used \AM{throughout} the manuscript.

\subsection{Wasserstein distance}
In this section we recall the definition of the Wasserstein distance between two probability measures and its explicit formula when the two probability measures are Gaussian distributions. The Wasserstein metric plays an central role in many research fields such as optimal transport, partial differential equations and data science. For a detailed account of the topics, we refer the reader to Villani's monograph \cite{Villani2003}.

Let $P_2(\R^d)$ be the space of probability measures $\mu$ on $\R^d$ with finite second moment, namely
$$
\int_{\R^d}|x|^2\mu(dx)<\infty.
$$
Let $\mu$ and $\nu$ be two probability measures belonging to $P_2(\R^d)$. The $L^2$-Wasserstein distance, $W_2(\mu,\nu)$, between $\mu$ and $\nu$ is defined via
\begin{equation}
\label{eq: W2}
W^2_2(\mu,\nu):=\inf_{\gamma\in \Gamma(\mu,\nu)}\int_{\R^d\times\R^d}|x-y|^2\,\gamma(dx,dy),
\end{equation}
where $\Gamma(\mu,\nu)$ denotes the set of all couplings between $\mu$ and $\nu$, i.e., the set of all probability measures on $\R^d\times \R^d$ having $\mu$ and $\nu$ as the first and the second marginals respectively. More precisely,
$$
\Gamma(\mu,\nu):=\{\gamma\in P(\R^d\times \R^d): \gamma(A\times \R^d)=\mu(A)~\text{and}~ \gamma(\R^d\times A)=\nu(A)\},
$$
for all Borel measurable sets $A\subset\R^d$.

In particular, the Wasserstein distance between two Gaussian measures can be computed explicitly in terms of the means and covariance matrices  \cite{GivensShortt1984}, see also e.g., \cite{Takatsu2012}
\begin{equation}
\label{eq: W2-Gaussians}
W_2(\N(u,U),\N(v,V))^2=|u-v|^2+\mathrm{tr}U+\mathrm{tr}V-2\mathrm{tr}\sqrt{V^\frac{1}{2}U V^\frac{1}{2}},
\end{equation}
where $u, v$ are the means and $U, V$ are the covariance matrices.
In a one dimensional space, the above formula reduces to
\begin{equation}
W_2(\N(u_1,\sigma_1^2),\N(u_2,\sigma_2^2)^2=(u_1-u_2)^2+(\sigma_1-\sigma_2)^2.
\end{equation}
\subsection{Linear drift-diffusion equations}
\label{sec: linear SDE}
We recall here a well-known result \AM{concerning} the explicit solution of a general linear drift-diffusion where the initial data is a Gaussian distribution. In the subsequent sections, we will apply this result to our models of (coupled) Brownian oscillators.

\AM{To set the stage, we }consider the following general linear drift-diffusion equation
\begin{equation}
\label{eq: linear diffusion}
    \partial_t\rho=-\div(Cx\rho)+\div(D\nabla\rho),\quad \rho(0)=\rho_0.
\end{equation}
In the above equation, the unknown is a probability measure  $\rho=\rho(t,x)$ with $(t,x)\in (0,\infty)\times \mathbb{R}^d$; $C$ and $D$ are two constant matrices of order $d$ representing the drift and diffusion matrices; the initial data $\rho_0$ is a probability measure on $\mathbb{R}^d$.

The following lemma provides the explicit formula for the solution of \eqref{eq: linear diffusion} when the initial data is a Gaussian distribution, see for instance \cite{gomes2018mean}.
\begin{lemma}
\label{lem: Gaussian sol}
Suppose the initial data is a Gaussian, $\rho_0\sim \mathcal{N}(\mu(0),\Sigma(0))$, then the solution to \eqref{eq: linear diffusion} is given by 
\begin{equation}
\label{eq: MKEGaussian}
\rho(t,x)=\frac{1}{\sqrt{(2\pi)^d\det\Sigma(t)}}\exp\Big[-\frac{1}{2}(x-\mu(t))^T \Sigma^{-1}(t)(x-\mu(t))\Big]  
\end{equation}
where $\mu(t)$ and $\Sigma(t)$ are given by
\begin{equation}
\label{eq: MKE Gaussian mean-variance}
\mu(t):=e^{tC}\mu(0),\quad \Sigma(t):=e^{t C}\Sigma(0)e^{t C^T}+2\int_0^t e^{s C}De^{s C^T}\,ds.
\end{equation}
Under suitable conditions on $C$ and $K$, we have $\mu(t)\rightarrow 0$ and $\Sigma(t)\rightarrow \Sigma_\infty$ where
$$
\Sigma_\infty:=2\int_0^\infty e^{sC}De^{sC^T}\,ds.
$$
Note that $\Sigma_\infty$ satisfies the so-called Lyapunov equation
$$
2D=C\Sigma_\infty+\Sigma_\infty C^T.
$$
\end{lemma}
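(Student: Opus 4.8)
\emph{Proof proposal.} The natural strategy is to verify the Gaussian ansatz directly, reducing the linear Fokker--Planck equation \eqref{eq: linear diffusion} to a pair of ordinary differential equations for the mean $\mu(t)$ and the covariance $\Sigma(t)$, and then to solve these ODEs explicitly. First I would insert \eqref{eq: MKEGaussian} into \eqref{eq: linear diffusion}. Writing $\rho=Z(t)^{-1}\exp[-\tfrac12 q(t,x)]$ with $q(t,x)=(x-\mu(t))^T\Sigma(t)^{-1}(x-\mu(t))$ and $Z(t)=\sqrt{(2\pi)^d\det\Sigma(t)}$, one has $\nabla\rho=-\Sigma^{-1}(x-\mu)\rho$, hence $\div(D\nabla\rho)=\big[(x-\mu)^T\Sigma^{-1}D\Sigma^{-1}(x-\mu)-\mathrm{tr}(\Sigma^{-1}D)\big]\rho$ and $\div(Cx\rho)=\big[\mathrm{tr}\,C-(x-\mu)^T\Sigma^{-1}Cx\big]\rho$, while, using $\tfrac{d}{dt}\Sigma^{-1}=-\Sigma^{-1}\dot\Sigma\,\Sigma^{-1}$ and $\tfrac{d}{dt}\log\det\Sigma=\mathrm{tr}(\Sigma^{-1}\dot\Sigma)$, $\partial_t\rho=\big[(x-\mu)^T\Sigma^{-1}\dot\mu+\tfrac12(x-\mu)^T\Sigma^{-1}\dot\Sigma\,\Sigma^{-1}(x-\mu)-\tfrac12\mathrm{tr}(\Sigma^{-1}\dot\Sigma)\big]\rho$. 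Dividing by $\rho$ and matching the parts that are quadratic, linear and constant in $x-\mu$ shows that the ansatz solves \eqref{eq: linear diffusion} if and only if
\[
\dot\mu = C\mu, \qquad \dot\Sigma = C\Sigma+\Sigma C^T+2D,
\]
with data $\mu(0),\Sigma(0)$.

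Second, I would solve this system. The mean equation integrates immediately to $\mu(t)=e^{tC}\mu(0)$. The covariance equation is an inhomogeneous Lyapunov differential equation; the substitution $\Sigma(t)=e^{tC}\Phi(t)e^{tC^T}$ turns it into $\dot\Phi=2e^{-tC}De^{-tC^T}$, so $\Phi(t)=\Sigma(0)+2\int_0^t e^{-sC}De^{-sC^T}\,ds$ and, after the change of variable $s\mapsto t-s$,
\[
\Sigma(t)=e^{tC}\Sigma(0)e^{tC^T}+2\int_0^t e^{sC}De^{sC^T}\,ds,
\]
which is \eqref{eq: MKE Gaussian mean-variance}. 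One also checks the ansatz is self-consistent: if $\Sigma(0)\succ 0$ (or $\succeq 0$ together with a controllability-type nondegeneracy of the pair $(C,\sqrt{D})$) the right-hand side stays symmetric positive definite, so $\Sigma(t)^{-1}$ exists for all $t$. To conclude that this Gaussian \emph{is} the solution one needs uniqueness of \eqref{eq: linear diffusion} in the class of probability densities; this follows from standard parabolic theory, or, more in the spirit of the computation, by passing to the Fourier transform: $\hat\rho(t,\xi)$ solves the first-order linear PDE $\partial_t\hat\rho=-(C^T\xi)\cdot\nabla_\xi\hat\rho-(\xi^TD\xi)\hat\rho$, which the method of characteristics solves uniquely as $\hat\rho(t,\xi)=\exp\!\big(i\mu(t)\cdot\xi-\tfrac12\xi^T\Sigma(t)\xi\big)$ with $\mu,\Sigma$ as above, i.e. precisely the characteristic function of \eqref{eq: MKEGaussian}. (Alternatively, one may read everything off the associated Ornstein--Uhlenbeck SDE $dX_t=CX_t\,dt+\sqrt{2D}\,dW_t$, whose mild solution $X_t=e^{tC}X_0+\int_0^t e^{(t-s)C}\sqrt{2D}\,dW_s$ is Gaussian with exactly these moments.)

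Third, for the long-time behaviour I would impose the natural stability hypothesis that $C$ is Hurwitz, so that $\|e^{tC}\|\le Me^{-\lambda t}$ for some $M,\lambda>0$. Then $\mu(t)\to 0$ and $e^{tC}\Sigma(0)e^{tC^T}\to 0$ exponentially, and the integral converges absolutely to $\Sigma_\infty:=2\int_0^\infty e^{sC}De^{sC^T}\,ds$, giving $\Sigma(t)\to\Sigma_\infty$. The Lyapunov equation for $\Sigma_\infty$ then follows either by letting $t\to\infty$ in the covariance ODE (its left-hand side $\dot\Sigma$ vanishing in the limit), or directly from $\tfrac{d}{ds}\big(e^{sC}De^{sC^T}\big)=Ce^{sC}De^{sC^T}+e^{sC}De^{sC^T}C^T$ integrated from $0$ to $\infty$, the boundary term at infinity vanishing by the Hurwitz property.

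Main obstacle: I expect the only genuinely delicate point to be the treatment of the matrix covariance equation — solving the inhomogeneous Lyapunov differential equation cleanly (the integrating-factor substitution $\Sigma=e^{tC}\Phi e^{tC^T}$ is the key trick) and ensuring the Gaussian ansatz stays nondegenerate so that $\Sigma(t)^{-1}$ in \eqref{eq: MKEGaussian} is well defined; everything else is bookkeeping, and the identification/uniqueness step is standard once one passes to Fourier variables or to the underlying SDE.
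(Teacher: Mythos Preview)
Your argument is correct and complete: inserting the Gaussian ansatz, matching quadratic/linear/constant terms in $x-\mu$ to obtain the moment ODEs $\dot\mu=C\mu$ and $\dot\Sigma=C\Sigma+\Sigma C^T+2D$, solving the latter via the integrating-factor substitution $\Sigma=e^{tC}\Phi e^{tC^T}$, and then reading off the long-time limit under a Hurwitz hypothesis is exactly the standard route, and each step you outline is sound.

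There is, however, nothing to compare against: the paper does not prove this lemma. It is stated as a known result with a reference (``see for instance \cite{gomes2018mean}'') and no proof environment follows. So your proposal is not an alternative to the paper's argument but a self-contained proof where the paper simply cites the literature. If anything, your write-up is more informative than what the paper provides, since you also address uniqueness (via Fourier transform or the underlying OU SDE) and the nondegeneracy of $\Sigma(t)$, points the paper leaves implicit.
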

\subsection{Exponential of a $2\times 2$ matrix}
Lemma \ref{lem: Gaussian sol} provides \AM{the explicit form of the unique} solution to the linear drift-diffusion equation \eqref{eq: linear diffusion} when the initial data is a Gaussian. However, in general the formula \eqref{eq: MKE Gaussian mean-variance} is analytically hard to compute since it involves exponential of matrices. The following lemma provides an explicit formula for the exponential of a $2\times 2$ matrix, which will be used in the subsequent analysis.
\begin{lemma}
\label{lem: exponential matrix}
\AM{Let $a,b,c,d \in\mathbb{R}$ be taken arbitrarily with $a^2+b^2+c^2+d^2>0$.} The following identity holds
\begin{equation}\label{magic}
    \exp\begin{pmatrix}
        a&b\\c &d
    \end{pmatrix}=\frac{1}{\Delta}\begin{pmatrix}
        m_{11}&m_{12}\\
        m_{21}& m_{22}
    \end{pmatrix},
\end{equation}
where $\Delta:=\sqrt{(a-d)^2+4bc}$ and
\begin{align*}
    m_{11}&:=e^{(a+d)/2}\Big[\Delta \cosh{\frac{1}{2}\Delta} +(a-d)\sinh{\frac{1}{2}\Delta}\Big],\\
    m_{12}&:=2be^{(a+d)/2}\sinh{\frac{1}{2}\Delta},
    \\ m_{21}&:=2 c e^{(a+d)/2}\sinh{\frac{1}{2}\Delta},
    \\ m_{22}&:=e^{(a+d)/2}\Big[\Delta \cosh{\frac{1}{2}\Delta} +(d-a)\sinh{\frac{1}{2}\Delta}\Big].
\end{align*}
\end{lemma}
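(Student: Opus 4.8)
The plan is to diagonalize (or at least triangularize) the matrix by peeling off its trace and then use the Cayley--Hamilton theorem on the trace-free part. Write $M=\begin{pmatrix}a&b\\c&d\end{pmatrix}$ and set $t:=\tfrac{a+d}{2}$, so that $M=tI+N$ with $N:=M-tI=\begin{pmatrix}(a-d)/2&b\\c&(d-a)/2\end{pmatrix}$. Since $tI$ commutes with $N$, we have $\exp(M)=e^{t}\exp(N)$, so it suffices to compute $\exp(N)$. The matrix $N$ is trace-free, and a direct computation gives $N^2=\bigl(\tfrac{(a-d)^2}{4}+bc\bigr)I=\tfrac{\Delta^2}{4}I$, where $\Delta:=\sqrt{(a-d)^2+4bc}$. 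Hence $N$ satisfies $N^2=\lambda^2 I$ with $\lambda:=\tfrac{\Delta}{2}$.

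Next I would expand the exponential series and collect even and odd powers. Using $N^{2k}=\lambda^{2k}I$ and $N^{2k+1}=\lambda^{2k}N$, we get
\begin{equation*}
\exp(N)=\sum_{k\ge0}\frac{N^k}{k!}=\Bigl(\sum_{k\ge0}\frac{\lambda^{2k}}{(2k)!}\Bigr)I+\Bigl(\sum_{k\ge0}\frac{\lambda^{2k}}{(2k+1)!}\Bigr)N=\cosh\lambda\,I+\frac{\sinh\lambda}{\lambda}\,N,
\end{equation*}
with the convention $\tfrac{\sinh\lambda}{\lambda}=1$ when $\lambda=0$, which is exactly the removable singularity that makes the formula valid for all $(a,b,c,d)$ with $a^2+b^2+c^2+d^2>0$ (here $\lambda^2=\Delta^2/4$ may be zero, negative, or positive; in the negative case $\cosh$ and $\sinh/\lambda$ are still entire functions of $\lambda^2$, so no case distinction is actually needed). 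Multiplying by $e^{t}$ and writing out the four entries of $\cosh\lambda\,I+\tfrac{\sinh\lambda}{\lambda}N$ yields, after substituting $\lambda=\Delta/2$ and clearing the factor $\tfrac{2}{\Delta}$ from $\tfrac{\sinh(\Delta/2)}{\Delta/2}$, precisely the claimed entries $m_{ij}/\Delta$. For instance the $(1,1)$ entry is $e^{t}\bigl(\cosh\tfrac{\Delta}{2}+\tfrac{a-d}{2}\cdot\tfrac{2}{\Delta}\sinh\tfrac{\Delta}{2}\bigr)=\tfrac{1}{\Delta}e^{(a+d)/2}\bigl(\Delta\cosh\tfrac{\Delta}{2}+(a-d)\sinh\tfrac{\Delta}{2}\bigr)=m_{11}/\Delta$, and the off-diagonal and $(2,2)$ entries follow in the same way.

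The only genuinely delicate point is the handling of $\Delta=0$ (equivalently $\lambda=0$): the right-hand side of \eqref{magic} as written has $\Delta$ in the denominator and in the arguments of $\cosh$ and $\sinh$, so one must check that it extends continuously. This is immediate once one observes that $\tfrac{1}{\Delta}\sinh\tfrac{\Delta}{2}=\tfrac12\cdot\tfrac{\sinh(\Delta/2)}{\Delta/2}\to\tfrac12$ and $\cosh\tfrac{\Delta}{2}\to1$ as $\Delta\to0$, so $m_{12}/\Delta\to b\,e^{(a+d)/2}$, $m_{11}/\Delta\to e^{(a+d)/2}\bigl(1+\tfrac{a-d}{2}\bigr)$, etc.; these limits agree with $\exp(N)=I+N$ when $N^2=0$. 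Since the series derivation above never divided by $\lambda$ until the final cosmetic rewriting, it covers this case uniformly, and the lemma follows. (One may alternatively verify the identity by checking that both sides agree at $t=0$ and have the same derivative $M\cdot(\text{RHS})$, i.e. that the RHS solves the matrix ODE $Y'=MY$, $Y(0)=I$; but the series computation is the most transparent route.)
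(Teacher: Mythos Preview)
Your argument is correct and complete: splitting off the trace, observing $N^2=(\Delta/2)^2 I$ by Cayley--Hamilton, and summing the even/odd parts of the exponential series is the standard and cleanest derivation of this formula, and your treatment of the removable singularity at $\Delta=0$ is careful and accurate.

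By way of comparison, the paper does not actually supply a proof of this lemma at all; it simply refers the reader to \cite{bernstein1993} for a justification. So your self-contained argument in fact goes beyond what the paper provides. The only remark worth adding is that the hypothesis $a^2+b^2+c^2+d^2>0$ in the statement plays no real role in your proof (nor, as far as one can tell, in the cited reference): your series computation is valid for every real $2\times 2$ matrix, including the zero matrix, once the right-hand side is interpreted via the limit you describe.
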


\begin{proof}\AM{We refer the reader to \cite{bernstein1993} for a justification of the formula \eqref{magic}.}
\end{proof}

\section{Model reduction of a Brownian oscillator}
\label{sec:sec3}

\AM{To start off the discussion, we begin with the investigation of a } simple model of an underdamped Brownian oscillator considered in \cite{CM22}, which is amenable to an explicit analytical solution.
The original dynamics reads as follows:
\begin{align*}
dx(t)&=v(t)\,dt\\
d v(t)&=-\omega^2 x(t)\,dt-\gamma v(t)\,dt+\sqrt{2\gamma\beta^{-1}}\,dW(t),\\
(x(0),v(0))&=(x_0,v_0)
\end{align*}
Exploiting the Invariant Manifold method and the Fluctuation-Dissipation relation (for a short summary of the method, see Section \ref{sec:sec4} below, where the same reduction procedure is applied to a system of coupled overdamped Brownian harmonic oscillators), the reduced dynamics attains the form: 
$$
d \bar{x}(t)=-\alpha \bar{x}(t)\,dt+\sqrt{2 D_r}\,dW(t),\quad \bar{x}(0)=x_0 ,
$$
where 
$$
\alpha=\frac{\gamma-\sqrt{\gamma^2-4\omega^2}}{2},\quad D_r=\frac{\alpha}{\omega^2\beta}.
$$ \AM{The reader is referred to \cite{CM22} to see the details of the calculations}. 
The main result of this section is the following theorem.
\begin{theorem} 
\begin{enumerate}[(i)]\
\item (exact solutions of the original and the reduced dynamics) $\mu_t$ and $\bar{\mu}_t$ are Gaussian measures 
\begin{equation}
\mu_t=\mathcal{N}(m(t),\sigma(t)),\quad \bar{\mu}_t=\mathcal{N}(\bar{m}_t,\bar{\sigma}(t)),
\end{equation}
where
\begin{align*}
m(t)&=\frac{\lambda_1 e^{-\lambda_2 t}-\lambda_2 e^{-\lambda_1 t}}{\lambda_1-\lambda_2}x_0+\frac{e^{-\lambda_2 t}-e^{-\lambda_1 t}}{\lambda_1-\lambda_2}v_0,\\
\sigma(t)&=\frac{\gamma \beta^{-1}}{(\lambda_1-\lambda_2)^2}\Big[\frac{\lambda_1+\lambda_2}{\lambda_1\lambda_2}+\frac{4}{\lambda_1+\lambda_2}(e^{-(\lambda_1+\lambda_2)t}-1)-\frac{1}{\lambda_1}e^{-2\lambda_1 t}-\frac{1}{\lambda_2}e^{-2\lambda_2 t}\Big],\\
\bar{m}(t)&=e^{-\lambda_2 t}\bar{x}_0,\\
\bar{\sigma}(t)&=\frac{1}{\omega^2 \beta}(1-e^{-2\lambda_2 t})
\end{align*}
where
\begin{equation}
\lambda_1=\frac{\gamma+\sqrt{\gamma^2-4\omega^2}}{2},\quad \lambda_2=\frac{\gamma-\sqrt{\gamma^2-4\omega^2}}{2}=\frac{2\omega^2}{\gamma+\sqrt{\gamma^2-4\omega^2}}.
\end{equation}
\item (Exact Wasserstein distance between the laws of the original and reduced dynamics) The Wasserstein distance between $\mu_t$ and $\bar{\mu}_t$ can be computed explicitly via
\begin{equation}
W_2^2(\mu_t,\bar{\mu}_t)=(m(t)-\bar{m}(t))^2+\Big(\sqrt{\sigma_{xx}(t)}-\sqrt{\bar{\sigma}(t)}\Big)^2.
\end{equation}
\item (explicit rate of convergence in the high-friction limit) It holds that
\begin{equation}
\label{eq: high-friction}
W_2^2(\mu_t,\bar{\mu}_t)\leq \frac{4}{\gamma^2-4\omega^2}\Big[(\omega |x_0|+|v_0|)^2+\frac{4}{\beta}\Big]\quad\forall t>0.
\end{equation}
As a consequence,
$$
\lim_{\gamma\rightarrow +\infty}W_2^2(\mu_t,\bar{\mu}_t)=0.
$$
Note that \eqref{eq: high-friction} is a much stronger statement providing an explicit rate of convergence.
\item (Common rates of convergence to equilibrium) There exists a constant $C>0$, which can be found explicitly, such that
$$
W_2(\mu_t,\mu_\infty),~ W_2(\bar{\mu}_t,\bar{\mu}_\infty)\leq C e^{-\lambda_2 t},
$$
where 
$$
\mu_\infty=\bar{\mu}_\infty=\mathcal{N}\Big(0, \frac{1}{\beta\omega^2}\Big).
$$
This result shows that the original dynamics and the reduced one not only share the same equilibrium, they have the same rates of convergence to equilibrium in the Wasserstein distance.
\item (long-time behaviour) It holds that
\begin{equation}
\label{eq: longtime}
W_2^2(\mu_t,\bar{\mu}_t)\leq \Big[\frac{\omega|x_0|+|v_0|}{\sqrt{\gamma^2-4\omega^2}}+\frac{10}{\beta(\gamma^2-4\omega^2)}\Big]e^{-\lambda_2 t}.
\end{equation}
As a consequence of this, we also have
$$
\lim_{t\rightarrow +\infty}W_2(\mu_t,\bar{\mu}_t)=0,
$$
which is already obtained in the previous part. Estimate \eqref{eq: longtime} is a stronger statement, showing that the two dynamics are exponentially close at any time $t>0$.
\item Suppose that the initial data $x_0$ is randomly distributed according to an even probability measure $\rho_0\in L^1(\mathbb{R})$ then the estimates in parts $(iii)$ and $(iv)$ still hold true.
\end{enumerate}
\end{theorem}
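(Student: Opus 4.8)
The plan is to reduce the randomized‑initial‑data situation to the deterministic one already settled in parts (i)--(v), by conditioning on $x_0$ and using the convexity of $W_2^2$ along mixtures with a common mixing measure. First I would record the structural observation that, since $\sigma(t)$ and $\bar\sigma(t)$ do not depend on the initial datum and $m(t)$ is affine in $(x_0,v_0)$, for $x_0\sim\rho_0$ the time‑$t$ laws are mixtures of Gaussians,
$$
\mu_t=\int_\R \mathcal N\big(m(t;x_0,v_0),\sigma(t)\big)\,\rho_0(dx_0),\qquad \bar\mu_t=\int_\R \mathcal N\big(\bar m(t;x_0),\bar\sigma(t)\big)\,\rho_0(dx_0),
$$
equivalently convolutions of a rescaled copy of $\rho_0$ with a Gaussian (with $x_0$ rescaled by $a(t):=\frac{\lambda_1 e^{-\lambda_2 t}-\lambda_2 e^{-\lambda_1 t}}{\lambda_1-\lambda_2}$ for $\mu_t$ and by $e^{-\lambda_2 t}$ for $\bar\mu_t$). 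In particular these laws are no longer Gaussian, so the closed‑form formula \eqref{eq: W2-Gaussians} and part (ii) do not apply to them verbatim; this is the only genuine obstacle.

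To circumvent it I would invoke the elementary gluing bound: if $\mu=\int\mu_\theta\,d\pi(\theta)$ and $\nu=\int\nu_\theta\,d\pi(\theta)$ share the mixing measure $\pi$, then pasting optimal couplings $\gamma_\theta\in\Gamma(\mu_\theta,\nu_\theta)$ into $\gamma:=\int\gamma_\theta\,d\pi(\theta)\in\Gamma(\mu,\nu)$ gives $W_2^2(\mu,\nu)\le\int W_2^2(\mu_\theta,\nu_\theta)\,d\pi(\theta)$. Applying this with $\pi=\rho_0$ yields
$$
W_2^2(\mu_t,\bar\mu_t)\le\int_\R W_2^2\Big(\mathcal N\big(m(t;x_0,v_0),\sigma(t)\big),\ \mathcal N\big(\bar m(t;x_0),\bar\sigma(t)\big)\Big)\rho_0(dx_0),
$$
and, with $\bar\mu_t$ replaced by $\mu_\infty=\mathcal N(0,1/(\beta\omega^2))$ (and analogously for $W_2^2(\bar\mu_t,\bar\mu_\infty)$),
$$
W_2^2(\mu_t,\mu_\infty)\le\int_\R W_2^2\Big(\mathcal N\big(m(t;x_0,v_0),\sigma(t)\big),\ \mu_\infty\Big)\rho_0(dx_0).
$$
Every integrand is now the deterministic‑datum Wasserstein distance, for which the one‑dimensional Gaussian formula of part (ii) is available.

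Then I would integrate the \emph{exact} conditional expressions, rather than the already coarsened bounds. By part (ii) the first integrand equals $\big((a(t)-e^{-\lambda_2 t})x_0+b(t)v_0\big)^2+\big(\sqrt{\sigma(t)}-\sqrt{\bar\sigma(t)}\big)^2$, with $b(t):=\frac{e^{-\lambda_2 t}-e^{-\lambda_1 t}}{\lambda_1-\lambda_2}$; expanding the square and integrating against the \emph{even} $\rho_0$ annihilates the term linear in $x_0$, leaving $(a(t)-e^{-\lambda_2 t})^2\int_\R x_0^2\,\rho_0(dx_0)+b(t)^2v_0^2+\big(\sqrt{\sigma(t)}-\sqrt{\bar\sigma(t)}\big)^2$, and likewise for the $\mu_\infty$ integrand. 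Estimating $|a(t)-e^{-\lambda_2 t}|$, $|b(t)|$, $|\sqrt{\sigma(t)}-\sqrt{\bar\sigma(t)}|$ (and the analogous quantities against $\mu_\infty$) exactly as in the proofs of parts (iii) and (iv) (and, by the same token, (v)) then reproduces those estimates with $|x_0|^2$ replaced by $\int_\R x_0^2\,\rho_0(dx_0)$, with the cross term $2\omega|x_0||v_0|$ absent, and with the constant $C$ of (iv) expressed through $\int_\R x_0^2\,\rho_0(dx_0)$; in particular $\lim_{\gamma\to+\infty}W_2^2(\mu_t,\bar\mu_t)=0$ uniformly in $t$, and $W_2(\mu_t,\mu_\infty),W_2(\bar\mu_t,\bar\mu_\infty)\le Ce^{-\lambda_2 t}$.

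The main obstacle is thus the conceptual one of the first paragraph — losing Gaussianity once $x_0$ is random, which disables the direct use of parts (i)--(ii) — and it is removed by the gluing/convexity bound, after which the argument is just the deterministic computation plus one elementary moment integral. The hypotheses enter transparently: absolute continuity of $\rho_0$ plays no real role (it only guarantees $\mu_t,\bar\mu_t$ are again absolutely continuous), evenness is precisely what makes the $x_0$‑linear term drop out so that no first‑moment term of $\rho_0$ survives, and finiteness of $\int_\R x_0^2\,\rho_0(dx_0)$ — which I would add to the hypotheses, since it is anyway needed for $\mu_t,\bar\mu_t\in P_2(\R)$ — is what keeps every right‑hand side finite.
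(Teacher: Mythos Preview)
Your argument for part~(vi) is correct but takes a different route from the paper. The paper represents $\mu_t$ and $\bar\mu_t$ as convolutions of the deterministic Gaussians with $\rho_0$ and then invokes the convolution contraction inequality $W_2(\mu\ast\eta,\nu\ast\eta)\le W_2(\mu,\nu)$ (citing Santambrogio, Lemma~5.2) to bound $W_2^2(\mu_t,\bar\mu_t)$ directly by the deterministic quantity from part~(ii), without any integration over $\rho_0$. You instead use the mixture representation and the joint convexity of $W_2^2$ under a common mixing measure (the gluing bound), then integrate the conditional Gaussian distance against $\rho_0$, using evenness of $\rho_0$ to annihilate the $x_0$--linear cross term. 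Your route is more careful on a point you yourself flagged: the $\rho_0$ factor enters $\mu_t$ and $\bar\mu_t$ through \emph{different} dilations ($a(t)$ versus $e^{-\lambda_2 t}$), so the paper's plain convolution identity $\mu_t=\mathcal N(m(t),\sigma(t))\ast\rho_0$ is schematic and the contraction lemma does not apply verbatim; the mixture/convexity argument sidesteps this cleanly. Your approach also makes explicit where evenness is actually used and surfaces the implicit finite--second--moment hypothesis on $\rho_0$, yielding constants expressed through $\int x_0^2\,\rho_0(dx_0)$; what the paper's approach buys, when the convolution structure is available with a common $\eta$, is a one--line reduction with no $\rho_0$--integral to compute.
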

\begin{proof}\
$(i)$. The law $\rho_t$ of $z(t)=\begin{pmatrix}x(t)\\v(t)\end{pmatrix}$ satisfies the kinetic Fokker Planck equation
$$
\partial_t\rho_t=\mathscr{L}^*\rho_t, \quad \rho\vert_{t=0}=\delta_{(x_0,v_0)},
$$
where $\mathscr{L}^*\rho:=-v\partial_x\rho+\omega^2 x\partial_v\rho+\gamma \big[\partial_{v}(v \rho)+\beta^{-1}\partial^2_{vv}\rho\big]$.

According to [Risken, Section 10.2] $\mu_t$ is a bivariate Gaussian measure with mean $M(t)\in \mathbb{R}^2$ and covariane matrix $\Sigma(t)\in\mathbb{R}^{2\times 2}$. \AM{They are $t$ dependent objects} given by
$$
M(t)=\begin{pmatrix}
m_x(x)\\ m_v(t)
\end{pmatrix}, \quad \Sigma^{-1}(t)=\begin{pmatrix}
[\sigma_{xx}(t)]^{-1}&[\sigma_{xv}(t)]^{-1}\\
[\sigma_{vx}(t)]^{-1}&[\sigma_{vv}(t)]^{-1}
\end{pmatrix},
$$
where 
\begin{align*}
m_x(t)&=\frac{\lambda_1 e^{-\lambda_2 t}-\lambda_2 e^{-\lambda_1 t}}{\lambda_1-\lambda_2}x_0+\frac{e^{-\lambda_2 t}-e^{-\lambda_1 t}}{\lambda_1-\lambda_2}v_0,\\
m_v(t)&=\omega^2 \frac{e^{-\lambda_1 t}-e^{-\lambda_2 t}}{\lambda_1-\lambda_2} x_0+\frac{\lambda_1 e^{-\lambda_1 t}-\lambda_2 e^{-\lambda_2 t}}{\lambda_1-\lambda_2}v_0,\\
\sigma_{xx}(t)&=\frac{\gamma \beta^{-1}}{(\lambda_1-\lambda_2)^2}\Big[\frac{\lambda_1+\lambda_2}{\lambda_1\lambda_2}+\frac{4}{\lambda_1+\lambda_2}(e^{-(\lambda_1+\lambda_2)t}-1)-\frac{1}{\lambda_1}e^{-2\lambda_1 t}-\frac{1}{\lambda_2}e^{-2\lambda_2 t}\Big],\\
\sigma_{xv}(t)&=\frac{\gamma \beta^{-1}}{(\lambda_1-\lambda_2)^2}(e^{-\lambda_1 t}-e^{-\lambda_2 t})^2,\\
\sigma_{vv}(t)&=\frac{\gamma \beta^{-1}}{(\lambda_1-\lambda_2)^2}\Big[\lambda_1+\lambda_2+\frac{4\lambda_1\lambda_2}{\lambda_1+\lambda_2}(e^{-(\lambda_1+\lambda_2)t}-1)-\lambda	_1 e^{-2\lambda_1 t}-\lambda_2 e^{-2\lambda_2 t}\Big],
\end{align*} 
where
\begin{equation}
\label{eq: lambdas}
\lambda_{1}=\frac{\gamma+\sqrt{\gamma^2-4\omega^2}}{2},\quad \lambda_{2}=\frac{\gamma-\sqrt{\gamma^2-4\omega^2}}{2},\quad\text{thus}\quad \lambda_1+\lambda_2=\gamma,\quad \lambda_1\lambda_2=\omega^2,\quad \lambda_1-\lambda_2=\sqrt{\gamma^2-4\omega^2}.
\end{equation}
Note that, since in the overdamped regime $\gamma\geq 2\omega$, we have
$$
\lambda_2=\frac{\gamma- \sqrt{\gamma^2-4\omega^2}}{2}=\frac{4\omega^2}{2(\gamma+\sqrt{\gamma^2-4\omega^2})}\leq \frac{4\omega^2}{4\omega}=\omega.
$$
Since $z(t)$ is a bivariate Gaussian, it follows that the law of $x(t)$, which is the first marginal of $z(t)$, is a univariate Gaussian measure, $\mu_t=\mathcal{N}(m(t),\sigma(t))$, with mean $m(t)=m_x(t)$ and variance $\sigma(t)=\sigma_{xx}(t)$, where $m_x(t)$ and $\sigma_{xx}(t)$ are defined above. Using \eqref{eq: lambdas} we can re-write $m(t)$ and $\sigma(t)$ as follows
\begin{align}
\label{eq: sigmaxx}
m(t)&=e^{-\lambda_2 t}x_0+\frac{e^{-\lambda_2 t}-e^{-\lambda_1 t}}{\lambda_1-\lambda_2}(\lambda_2 x_0+ v_0),\\
\sigma(t)&=\frac{\gamma\beta^{-1}}{(\gamma^2-4\omega^2)}\Big[\frac{\gamma}{\omega^2}+\frac{4}{\gamma}(e^{-\gamma t}-1)-\frac{1}{\lambda_1}e^{-2\lambda_1 t}-\frac{1}{\lambda_2}e^{-2\lambda_2 t}\Big]
\\&=\frac{1}{\beta \omega^2\,[1-4(\omega/\gamma)^2]}\big(1-e^{-2\lambda_2 t}\big)+\frac{\gamma\beta^{-1}}{(\gamma^2-4\omega^2)}\Big[\frac{4}{\gamma}(e^{-\gamma t}-1)-\frac{e^{-2\lambda_1 t}-e^{-2\lambda_2 t}}{\lambda_1}\Big],
\end{align}
where in the last equality we have used the following equality
\begin{align*}
\frac{1}{\lambda_1}e^{-2\lambda_1 t}+\frac{1}{\lambda_2}e^{-2\lambda_2 t}&=\frac{(\lambda_1+\lambda_2)e^{-2\lambda_2 t}}{\lambda_1 \lambda_2}+\frac{(e^{-2\lambda_1 t}-e^{-2\lambda_2 t})}{\lambda_1}
\\&=\frac{\gamma e^{-2\lambda_2 t}}{\omega^2}+\frac{(e^{-2\lambda_1 t}-e^{-2\lambda_2t})}{\lambda_1}
\end{align*}
The reduced dynamics is an Ornstein-Uhlenbeck process, therefore its law is a Gaussian measure, $\bar{\mu}_t=\mathcal{N}(\bar{m}(t),\bar{\sigma}^2(t))$, with mean
\begin{equation}
\bar{m}(t)=e^{-\alpha t} x_0=e^{-\lambda_2 t}x_0,
\end{equation}
and variance 
\begin{equation}
\bar{\sigma}(t)=\frac{D_r}{\alpha}(1-e^{-2\alpha t})=\frac{1}{\omega^2 \beta}(1-e^{-2\lambda_2 t}).
\end{equation}
$(ii)$ Using the general explicit formula for the Wasserstein distance between two univariate Gaussian measures, we obtain the Wasserstein distance between the original dynamics and the reduced dynamics, $W^2_2(\mu_t,\bar{\mu}_t)$, as follows
\begin{equation}
W_2^2(\mu_t,\bar{\mu}_t)^2=\Big(m_x(t)-\bar{m}(t)\Big)^2+\Big(\sqrt{\sigma_{xx}(t)}-\sqrt{\bar{\sigma}(t)}\Big)^2,
\end{equation}
$(iii)$ We now provide estimate for $W_2^2(\mu_t,\bar{\mu}_t)$ in the high-friction regime, which corresponds to a large time-scale separation, since the difference $\lambda_1-\lambda_2=\sqrt{\gamma^2-4\omega^2}$ grows with $\gamma$ for fixed $\omega$. We have
\begin{equation}
\label{eq: difference mean}
m(t)-\bar{m}(t)=\frac{e^{-\lambda_2 t}-e^{-\lambda_1 t}}{\lambda_1-\lambda_2}(\lambda_2 x_0+ v_0).
\end{equation}
Therefore, since $|e^{-\lambda_2 t}-e^{-\lambda_1 t}\leq |e^{-\lambda_2 t}|+|e^{-\lambda_1 t}|\leq 2$,
$$
|m(t)-\bar{m}(t)|\leq \frac{2}{\sqrt{\gamma^2-4\omega^2}}\big(\lambda_2 |x_0|+|v_0|\big)\leq \frac{2}{\sqrt{\gamma^2-4\omega^2}}\big(\omega |x_0|+|v_0|\big)
$$
Next we estimate $|\sigma(t)-\bar{\sigma}_t|$. Since
$$
|e^{-\gamma t}-1|\leq e^{-\gamma t} +1\leq 2, \quad |e^{-2\lambda_1 t}-e^{-2\lambda_2 t}|\leq e^{-2\lambda_1 t}+e^{-2\lambda_2 t}\leq 2, \quad \gamma_1\geq \frac{\gamma}{2}
$$
we have
$$
\Big|\frac{4}{\gamma}(e^{-\gamma t}-1)-\frac{e^{-2\lambda_1 t}-e^{-2\lambda_2 t}}{\lambda_1}\Big|\leq \frac{4}{\gamma}|e^{-\gamma t}-1|+\frac{|e^{-2\lambda_1 t}-e^{-2\lambda_2 t}|}{\lambda_1}\leq \frac{12}{\gamma}.
$$
Therefore,
\begin{align}
|\sigma(t)-\bar{\sigma}(t)|&=\left|\frac{1-e^{-2\lambda_2 t}}{\beta\omega^2}\Big[\frac{1}{1-4(\omega/\gamma)^2}-1\Big]+\frac{\gamma\beta^{-1}}{(\gamma^2-4\omega^2)}\Big[\frac{4}{\gamma}(e^{-\gamma t}-1)-\frac{e^{-2\lambda_1 t}-e^{-2\lambda_2 t}}{\lambda_1}\Big]\right|\label{eq: difference variance}
\\&=\left|\frac{1-e^{-2\lambda_2 t}}{\beta}\frac{4(1/\gamma)^2}{1-4(\omega/\gamma)^2}+\frac{\gamma\beta^{-1}}{(\gamma^2-4\omega^2)}\Big[\frac{4}{\gamma}(e^{-\gamma t}-1)-\frac{e^{-2\lambda_1 t}-e^{-2\lambda_2 t}}{\lambda_1}\Big]\right|\notag
\\& \leq \frac{4}{\beta(\gamma^2-4\omega^2)}+\frac{12}{\beta(\gamma^2-4\omega^2)}=\frac{16}{\beta(\gamma^2-4\omega^2)}.\notag
\end{align}
It follows that
\begin{align*}
W_2^2(\mu,\bar{\mu})&=\big(m(t)-\bar{m}(t)\big)^2+\Big(\sqrt{\sigma(t)}-\sqrt{\bar{\sigma}(t)}\Big)^2
\\&\leq \big(m_x(t)-\bar{m}(t)\big)^2+|\sigma_{xx}(t)-\bar{\sigma}(t)|
\\& \leq \frac{4}{\gamma^2-4\omega^2}(\omega |x_0|+|v_0|)^2+\frac{16}{\beta(\gamma^2-4\omega^2)}=\frac{4}{\gamma^2-4\omega^2}\Big[(\omega |x_0|+|v_0|)^2+\frac{4}{\beta}\Big],
\end{align*}
where to obtain the second line from the first line, we have used the inequality $(a-b)^2\leq |a^2-b^2|$ for $a,b\geq 0$.

$(iv)$ We have
\begin{equation*}
   \lim_{t\rightarrow \infty} m(t)=\lim_{t\rightarrow \infty} \bar{m}(t)=0\quad \forall x_0, v_0;\quad \lim_{t\rightarrow \infty}\sigma(t)=\frac{1}{\beta \omega^2}=:\sigma_\infty; \quad\lim_{t\rightarrow \infty}\bar{\sigma}(t)=\frac{1}{\beta\omega^2}=:\bar{\sigma}_\infty=\sigma_\infty.
\end{equation*}
Thus the original dynamics and the reduced one share the same equilibrium measure
$$
\mu_\infty=\bar{\mu}_\infty=\mathcal{N}(0,\sigma_\infty).
$$
Furthermore, we compute the rates of convergence explicitly
\begin{align*}
W_2(\mu_t,\mu_\infty)^2&= (m(t)-m_\infty)^2+(\sqrt{\sigma(t)}-\sqrt{\sigma_\infty})^2
\\&\leq m(t)^2+|\sigma(t)-\sigma_\infty|
\\&=\Big(e^{-\lambda_2 t}x_0+\frac{e^{-\lambda_2 t}-e^{-\lambda_1 t}}{\lambda_1-\lambda_2}(\lambda_2 x_0+ v_0)\Big)^2+\frac{\gamma\beta^{-1}}{(\gamma^2-4\omega^2)}\Big|\frac{4}{\gamma}e^{-\gamma t}-\frac{1}{\lambda_1} e^{-2\lambda_1 t}-\frac{1}{\lambda_2}e^{-2\lambda_2}\Big|
\\&=e^{-2\lambda_2 t}\Big(x_0+\frac{1-e^{-(\lambda_1-\lambda_2)t}}{\lambda_1-\lambda_2}(\lambda_2 x_0+v_0)\Big)^2+\frac{\gamma\beta^{-1}}{(\gamma^2-4\omega^2)}e^{-2\lambda_2t}\Big|\frac{4}{\gamma}e^{-2\lambda_1 t}-\frac{1}{\lambda_1} e^{-2(\lambda_1-\lambda_2) t}-\frac{1}{\lambda_2}\Big|
\\&\leq C e^{-2\lambda_2 t},
\end{align*}
for some constant $C$, which can be computed explicitly (but it is not the focus of this part), where we have used the fact that $\lambda_1>\lambda_2>0$. Thus
$$
W_2(\mu_t,\mu_\infty)\leq C e^{-\lambda_2 t}.
$$
Similarly 
\begin{align*}
 W_2(\bar{\mu}_t,\bar{\mu}_\infty)^2&= (\bar{m}(t)-\bar{m}_\infty)^2+(\sqrt{\bar{\sigma}(t)}-\sqrt{\bar{\sigma}_\infty})^2
\\&\leq \bar{m}(t)^2+|\overline{\sigma}(t)-\overline{\sigma}_\infty|  \\&=e^{-2\lambda_2 t}\Big[x_0^2+\frac{1}{\beta\omega^2}\Big].
\end{align*}
Thus we also obtain 
$$
W_2(\bar{\mu}_t,\bar{\mu}_\infty)\leq C e^{-\lambda_2 t}.
$$

$(v)$ Now we estimate $W_2^2(\mu_t,\bar{\mu}_t)$ in the large time regime. We only need to estimate the difference between the variances $|\sigma(t)-\bar{\sigma}(t)|$. According to \eqref{eq: difference variance}, we have
\begin{align}
\sigma(t)-\bar{\sigma}(t)&=\frac{1-e^{-2\lambda_2 t}}{\beta\omega^2}\Big[\frac{1}{1-4(\omega/\gamma)^2}-1\Big]+\frac{\gamma\beta^{-1}}{(\gamma^2-4\omega^2)}\Big[\frac{4}{\gamma}(e^{-\gamma t}-1)-\frac{e^{-2\lambda_1 t}-e^{-2\lambda_2 t}}{\lambda_1}\Big]
\nonumber\\
&=-\frac{e^{-2\lambda_2 t}}{\beta\omega^2}\Big[\frac{1}{1-4(\omega/\gamma)^2}-1\Big]+\frac{\gamma\beta^{-1}}{(\gamma^2-4\omega^2)}\Big[\frac{4}{\gamma}e^{-\gamma t}-\frac{e^{-2\lambda_1 t}-e^{-2\lambda_2 t}}{\lambda_1}\Big]\nonumber
\end{align}
where, to obtain the second line, we have used the following cancellation
$$
\frac{1}{\beta\omega^2}\Big[\frac{1}{1-4(\omega/\gamma)^2}-1\Big]-\frac{\gamma\beta^{-1}}{(\gamma^2-4\omega^2)}\frac{4}{\gamma}=0.
$$
Therefore, \AM{ it holds }
$$
|\sigma(t)-\bar{\sigma}(t)|\leq \frac{e^{-2\lambda_2 t}}{\beta\omega^2}\Big[\frac{1}{1-4(\omega/\gamma)^2}-1\Big]+\frac{\gamma\beta^{-1}}{(\gamma^2-4\omega^2)}\Big[\frac{4}{\gamma}e^{-\gamma t}+\frac{e^{-2\lambda_2 t}-e^{-2\lambda_1 t}}{\lambda_1}\Big].
$$

$(vi)$ \AM{Now, we can }estimate the Wasserstein distance $W_2^2(\mu_t,\bar{\mu}_t)$ \AM{to explore } the long time behaviour, \AM{viz.}
\begin{align*}
W_2^2(\mu_t,\bar{\mu}_t)&=\big(m(t)-\bar{m}(t)\big)^2+\Big(\sqrt{\sigma(t)}-\sqrt{\bar{\sigma}(t)}\Big)^2
\\&\leq \big(m_x(t)-\bar{m}(t)\big)^2+|\sigma_{xx}(t)-\bar{\sigma}(t)|
\\& \leq \frac{e^{-\lambda_2 t}-e^{-\lambda_1 t}}{\lambda_1-\lambda_2}(\omega |x_0|+ |v_0|)+\frac{e^{-2\lambda_2 t}}{\beta\omega^2}\Big[\frac{1}{1-4(\omega/\gamma)^2}-1\Big]+\frac{\gamma\beta^{-1}}{(\gamma^2-4\omega^2)}\Big[\frac{4}{\gamma}e^{-\gamma t}+\frac{e^{-2\lambda_2 t}-e^{-2\lambda_1 t}}{\lambda_1}\Big]
\\&=\frac{4}{\beta(\gamma^2-4\omega^2)}e^{-\gamma t}+\Big[\frac{\omega|x_0|+|v_0|}{\sqrt{\gamma^2-4\omega^2}}+\frac{4}{\beta(\gamma^2-4\omega^2)}+\frac{\gamma}{\beta\lambda_1(\gamma^2-4\omega^2)}\Big]e^{-\lambda_2 t}
\\&\qquad-\Big[\frac{\omega|x_0|+|v_0|}{\sqrt{\gamma^2-4\omega^2}}+\frac{\gamma}{\beta\lambda_1(\gamma^2-4\omega^2)}\Big]e^{-\lambda_1 t}
\\& \leq \Big[\frac{\omega|x_0|+|v_0|}{\sqrt{\gamma^2-4\omega^2}}+\frac{8}{\beta(\gamma^2-4\omega^2)}+\frac{\gamma}{\beta\lambda_1(\gamma^2-4\omega^2)}\Big]e^{-\lambda_2 t}
\\& \leq \Big[\frac{\omega|x_0|+|v_0|}{\sqrt{\gamma^2-4\omega^2}}+\frac{10}{\beta(\gamma^2-4\omega^2)}\Big]e^{-\lambda_2 t}.
\end{align*}
\AM{Here} we have used the fact that $\gamma\geq \lambda_2$ and $\frac{\gamma}{\lambda_1}=\frac{2\gamma}{\gamma+\sqrt{\gamma^2-4\omega^2}}\leq 2$.

$(vi)$. Suppose that $x_0$ is randomly distributed following an even distribution $\rho_0$. Then the laws of $x(t)$ and $\bar{x}(t)$ are given by
$$
\mu_t=\mathcal{N}(m(t),\sigma(t))\ast \rho_0,\quad \mu_t=\mathcal{N}(\bar{m}(t),\bar{\sigma}(t))\ast \rho_0.
$$
Since $\mathcal{N}(m(t),\sigma(t)),\mathcal{N}(\bar{m}(t),\bar{\sigma}(t))\in \mathcal{P}_2(\mathbb{R})$, according to \cite[Lemma 5.2]{santambrogio2015optimal} we have
$$
W_2^2(\mu_t,\bar{\mu}_t)=W_2^2(\mathcal{N}(m(t),\sigma(t))\ast \rho_0, \mathcal{N}(\bar{m}(t),\bar{\sigma}(t))\ast \rho_0)\leq W^2(\mathcal{N}(m(t),\sigma(t)),\mathcal{N}(\bar{m}(t),\bar{\sigma}(t)),
$$
thus the upper bound estimates in the two previous parts are still true.
\end{proof}

\section{Model reduction of two coupled underdamped Brownian oscillators}
\label{sec:sec4}

We now proceed with the computation of the Wasserstein distance for a slightly more elaborate model, corresponding to a system of two coupled overdamped Brownian harmonic oscillators. The dynamics of the model can conveniently be written as follows:
\begin{subequations}
\label{eq: coupled oscillator}
\begin{align}
    \dot{x}_1&=a x_1+k (x_2-x_1)+\sigma_1 \dot{W}_1\\
    \dot{x}_2&=-k (x_2-x_1)+d x_2+\sigma_2 \dot{W}_2,
\end{align}
\end{subequations}
where $\dot{W}$ denotes the formal derivative of a Wiener process, corresponding to a white noise, $a, d<0$ are parameters characteristic of the individual oscillator (without loss of generality we also assume $a\ge d$), $\sigma_1,\sigma_2>0$ denote the noise strenghts, \AM{and finally}, $k>0$ is the coupling parameter.

The system \eqref{eq: coupled oscillator} represents the overdamped version of the coupled underdamped Langevin dynamics of the two oscillators. A contracted description for the deterministic case (i.e., with $\sigma_1=\sigma_2=0$) under a suitable assumption of scale separation is studied, with applications to relaxation dynamics in proteins, in \cite{Soheilifard2011}. We can derive a reduced system by eliminating the variable $x_2$, in \eqref{eq: coupled oscillator}, using the procedure introduced in \cite{CM22,CDM22}. This consists of two distinct steps: \AM{(i)} the
deterministic component of the dynamics is obtained using the Invariant Manifold method, then \AM{(ii)} the diffusion terms are determined
via \AM{fulfilling} the Fluctuation-Dissipation relation. 

\subsection{Deterministic evolution}
Let $\langle \mathcal{O}\rangle$ denote the average over noise of the variable $\mathcal{O}$. The original dynamics can be written as
\begin{equation}
\dot{\mathbf{z}}=\mathbf{Q}~ \mathbf{z} \;, \label{orig}
\end{equation}
where $\mathbf{z}=(\langle x_1\rangle, \langle x_2\rangle)$ and
\begin{equation}
\label{eq: Q}
\mathbf{Q}=\mathbf{Q}(k)=\begin{pmatrix}
a-k&k\\ k &-k+d
\end{pmatrix}    
\end{equation}

The characteristic polynomial of $\mathbf{Q}$ is
$$
\lambda^2-(a+d-2k)\lambda+(ad-ak-dk)=0.
$$
Thus $\mathbf{Q}$ has two real negative eigenvalues:
\begin{equation}
\label{eq: lambda}
\lambda_{\pm}=\lambda_{\pm}(k):=\frac{(a+d-2k)\pm \sqrt{(a-d)^2+4k^2}}{2} \;,
\end{equation}

\AM{In this model, the time-scale separation is encoded in}  the difference $\lambda_+-\lambda_-=\sqrt{(a-d)^2+4k^2}$, which grows with \AM{increasing} $k$, for fixed parameters $a,d$.
We seek a closure of the form
$\langle x_2\rangle=\alpha \langle x_1\rangle$, hence, following \cite{CDM22}, we define a \textit{macroscopic} time derivative of $\langle x_2 \rangle$ via the chain rule:
\begin{align*}
\partial_t^{macro} \langle x_2 \rangle&:=\frac{\partial \langle x_2\rangle}{\partial \langle x_1\rangle}\langle\dot{x}_1\rangle
\\&=
(\alpha (a-k)+ \alpha^2 k)\langle x_1\rangle \;,
\end{align*}  
which expresses the \textit{slaving principle} mentioned in Sec. \ref{sec:sec1}. Furthermore, we also define the \textit{microscopic} time derivative of $\langle x_2 \rangle$ in terms of the vector field given in Eq. \eqref{orig}, where $\langle x_2\rangle$ is expressed through the aforementioned closure. We thus set:
\begin{align*}
\partial_t^{micro} \langle x_2 \rangle&:=k \langle x_1\rangle+(d-k)\langle x_2\rangle
\\
&=(k+\alpha (d-k))\langle x_1\rangle \;.
\end{align*}

The Invariant Manifold method requires that microscopic and macroscopic time derivatives of $\langle x_2\rangle$ coincide, independently of the values of the observable $\langle x_1\rangle$. Thus, we obtain the following invariance equation

\begin{equation}
    \label{eq: Invariance equation}
    \alpha (a-k)+\alpha^2 k=k+\alpha (d-k)\quad \Longleftrightarrow \quad k\alpha^2+(a-d)\alpha +k=0 \;,
\end{equation}
which has two solutions
$$
\alpha_{\pm}=\alpha_{\pm}(k):=\frac{-(a-d)\pm \sqrt{(a-d)^2+4k^2}}{2k} \;.
$$
The reduced dynamics for the deterministic part is
\begin{equation}
    \label{eq: reduced dynamics}
    \langle \dot{x}_1\rangle =(a-k+ k \hat{\alpha})\langle x_1\rangle \;,
\end{equation}
where $\hat{\alpha}\in\{\alpha_+, \alpha_-\}$ which will be specified later. It is noticeable that
$$
a-k+k\alpha_{\pm}=\frac{(a+d-2k)\pm\sqrt{(a-d)^2+4k^2}}{2}\equiv \lambda_{\pm} \;.
$$
\AM{Looking at} \eqref{eq: reduced dynamics}, \AM{ we notice that } the coefficient multiplying $\langle x_1 \rangle$  coincides with one of the eigenvalues of the matrix $\mathbf{Q}$. To pick up the right eigenvalue, we use the following criterion.  
We select $\hat{\alpha}$ from solutions $\alpha_\pm$ to the invariance equation \eqref{eq: Invariance equation} that satisfies $a-k+k \hat{\alpha} \rightarrow a<0$ as $k\rightarrow 0$, that is $k\hat{\alpha} \rightarrow 0$ as $k\rightarrow 0$. Since we assume that $a\geq d$, we take
$$
\hat{\alpha}=\alpha_{+}=\frac{-(a-d)+ \sqrt{(a-d)^2-4k^2}}{2k} \;.
$$

\subsection{Incorporating the noise}
To characterize the noise term, we \AM{employ the methodology proposed in } \cite{CDM22}. Therefore, we first define the diffusion matrix $\boldsymbol{D}$ as
\begin{equation}
\label{eq: D}
\boldsymbol{D}=\begin{pmatrix}
    \sigma_1& 0\\ 0& \sigma_2
\end{pmatrix} \; ,
\end{equation}
and we also denote 
$$
\dot{\mathbf{W}}=(\dot{W}_1,\dot{W}_2) \; .
$$
The solution of Eqs. \eqref{eq: coupled oscillator} reads:
\begin{equation}
\mathbf{z}(t)=e^{\mathbf{Q}t}\mathbf{z}_0+\int_0^t e^{\mathbf{Q}(t-s)}\mathbf{D}~\dot{\mathbf{W}} ds \; .
\end{equation}
We thus find
\begin{align*}
   \lim_{t\rightarrow\infty} \mathbb{E}[z_1(t)^2]\equiv\overline{\Sigma}_{11}= -\frac{1}{2}\Big(
\frac{1}{a-2k}+\frac{1}{a}\Big).
\end{align*}
The full reduced system takes hence the form
\begin{equation}
d\hat{x}(t)=\lambda_+\hat{x}(t)\,dt + \hat{D}dW_t \;, \label{redeq}
\end{equation}
where the drift coefficient $\lambda_+$ is defined in \eqref{eq: lambda} and the diffusion coefficient $\hat{D}$ is given by
\begin{equation}
\hat{D}=-\lambda_+\overline{\Sigma}_{11} \;. \label{redeq2}
\end{equation}

\subsection{Quantification of errors and \AM{the} long-time behaviour}
In this section we will compute explicitly the Wasserstein distance between the laws of the original dynamics of $x_1$ and of the reduced dynamics \eqref{redeq} and study their long-time behaviour. The Fokker Planck equation associated to the full original dynamics \eqref{eq: coupled oscillator} is given by the following linear-drift diffusion equation
\begin{equation}
\label{eq: PDE}
   \partial_t\rho=-\div(\mathbf{Q}\rho)+\div(\boldsymbol{D}\nabla \rho), 
\end{equation}
where $\rho=\rho(t,x_1,x_2)$ is the joint probability density of $(x_1, x_2)$, the drift matrix $\boldsymbol{Q}$ and the diffusion matrix $\boldsymbol{D}$ are given in \eqref{eq: Q} and \eqref{eq: D} respectively. Note that the above system is a special case of the general drift-diffusion equation introduced in Section \ref{sec: linear SDE}.

Since we are focusing on the role of the coupling parameter, for simplicity of presentation, we consider identical oscillator, that is $a=d<0$ and normalising $\sigma_1=\sigma_2=1$, so that
$$
\mathbf{Q}=\begin{pmatrix}
    a-k&k\\ k&a-k
\end{pmatrix},\quad\text{and}\quad \boldsymbol{D}=I.
$$
The main result of this section is the following theorem.
\begin{theorem} Let $\rho_1(t)$ be the distribution of $x_1(t)$ of the original coupled dynamics \eqref{redeq} starting at a deterministic initial data $(x_1,x_2)(0)=(x_1,x_2)$, and $\hat{\rho}_1(t)$ be the distribution of the reduced dynamics \eqref{redeq} starting from $x_1$. Then there exists a constant $C>0$ such that the following statements hold
   \begin{enumerate}[(i)]
       \item $W_2(\rho_1(t),\hat{\rho}_1(t))^2\leq C k$.
       \item $\max\{W_2(\rho_1(t),\rho_\infty), W_2(\hat{\rho}_1(t),\rho_\infty)\}\leq C e^{at}$, where $\rho_\infty=\mathcal{N}(0,\overline{\Sigma}_{11})$
       \item $W_2(\rho_1(t),\hat{\rho}_1(t)\leq C e^{at}$.
   \end{enumerate}
\end{theorem}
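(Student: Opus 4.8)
The plan is to follow exactly the route used in the proof of the theorem of Section \ref{sec:sec3}: both $\rho_1(t)$ and $\hat\rho_1(t)$ are one–dimensional Gaussians, so by \eqref{eq: W2-Gaussians} the Wasserstein distance is controlled by the differences of their means and of their standard deviations, and the whole argument reduces to writing these down explicitly and estimating them. First I would produce the explicit laws. Since $a=d$ the drift matrix $\mathbf{Q}=\begin{pmatrix}a-k&k\\k&a-k\end{pmatrix}$ is symmetric, with eigenvalues $\lambda_+=a$ and $\lambda_-=a-2k$. Applying Lemma \ref{lem: exponential matrix} with entries $(a-k,k,k,a-k)$ (so $\Delta=2k$) gives $e^{t\mathbf{Q}}=e^{(a-k)t}\begin{pmatrix}\cosh(kt)&\sinh(kt)\\\sinh(kt)&\cosh(kt)\end{pmatrix}$. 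By Lemma \ref{lem: Gaussian sol} (with $C=\mathbf{Q}$, $D=I$, $\Sigma(0)=0$, $\mu(0)=(x_1,x_2)^T$, using $\mathbf{Q}^T=\mathbf{Q}$) the joint law of $(x_1(t),x_2(t))$ is $\mathcal{N}(M(t),\Sigma(t))$ with $M(t)=e^{t\mathbf{Q}}(x_1,x_2)^T$ and $\Sigma(t)=2\int_0^t e^{2s\mathbf{Q}}\,ds$; reading off the first coordinate,
\begin{align*}
\rho_1(t)&=\mathcal{N}\big(m_1(t),\sigma_1^2(t)\big),\quad m_1(t)=\tfrac{x_1+x_2}{2}e^{\lambda_+ t}+\tfrac{x_1-x_2}{2}e^{\lambda_- t},\\
\sigma_1^2(t)&=\frac{1-e^{2\lambda_+ t}}{-2\lambda_+}+\frac{1-e^{2\lambda_- t}}{-2\lambda_-}\ \xrightarrow[t\to\infty]{}\ \overline{\Sigma}_{11}=\frac{1}{-2\lambda_+}+\frac{1}{-2\lambda_-}.
\end{align*}
The reduced dynamics \eqref{redeq} is a one–dimensional Ornstein--Uhlenbeck process with drift $\lambda_+<0$, started from the deterministic value $x_1$, whose equilibrium variance equals $\overline{\Sigma}_{11}$ (this is precisely the effect of the Fluctuation--Dissipation choice of $\hat D$); hence $\hat\rho_1(t)=\mathcal{N}\big(\hat m(t),\hat\sigma^2(t)\big)$ with $\hat m(t)=e^{\lambda_+ t}x_1$ and $\hat\sigma^2(t)=\overline{\Sigma}_{11}\big(1-e^{2\lambda_+ t}\big)$, and both laws converge to $\rho_\infty=\mathcal{N}(0,\overline{\Sigma}_{11})$.

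Next I would record the two scalar differences. Subtracting, and using the explicit form of $\overline{\Sigma}_{11}$ to cancel the terms in $e^{2\lambda_+ t}$ and the constants, one gets
\begin{equation*}
m_1(t)-\hat m(t)=\tfrac{x_1-x_2}{2}\big(e^{\lambda_- t}-e^{\lambda_+ t}\big),\qquad
\sigma_1^2(t)-\hat\sigma^2(t)=\frac{e^{2\lambda_+ t}-e^{2\lambda_- t}}{-2\lambda_-}\ \ge 0 .
\end{equation*}
By \eqref{eq: W2-Gaussians} together with the elementary inequality $(\sqrt u-\sqrt v)^2\le|u-v|$ for $u,v\ge 0$,
\begin{equation*}
W_2\big(\rho_1(t),\hat\rho_1(t)\big)^2\le\big(m_1(t)-\hat m(t)\big)^2+\big|\sigma_1^2(t)-\hat\sigma^2(t)\big|,
\end{equation*}
and likewise for the distances to $\rho_\infty$; so it only remains to estimate the (explicit, elementary) right-hand sides.

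For the estimates, write $\lambda_+=a$, $\lambda_-=a-2k$, so $0\le e^{\lambda_+ t}-e^{\lambda_- t}=e^{at}(1-e^{-2kt})$. From $1-e^{-u}\le u$ and $\sup_{t>0}te^{at}=1/(-ae)$ one has $e^{\lambda_+ t}-e^{\lambda_- t}\le 2kt\,e^{at}\le \tfrac{2k}{-ae}$, and since this quantity lies in $[0,1]$ its square is bounded by itself; similarly $0\le\sigma_1^2(t)-\hat\sigma^2(t)=\tfrac{e^{2at}(1-e^{-4kt})}{-2a+4k}\le\tfrac{4kt\,e^{2at}}{-2a}\le\tfrac{k}{a^2 e}$. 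Combining these yields $W_2(\rho_1(t),\hat\rho_1(t))^2\le\big(\tfrac{(x_1-x_2)^2}{-2ae}+\tfrac{1}{a^2 e}\big)k$, which is $(i)$ with a constant uniform in $t$ and $k$. For $(ii)$: $|m_1(t)|\le\max\{|x_1|,|x_2|\}e^{at}$ and $\hat m(t)^2=x_1^2e^{2at}$, while $|\sigma_1^2(t)-\overline{\Sigma}_{11}|\le e^{2at}/(-a)$ and $|\hat\sigma^2(t)-\overline{\Sigma}_{11}|=\overline{\Sigma}_{11}e^{2at}$, so both Wasserstein distances to $\rho_\infty$ are $\le Ce^{at}$. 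For $(iii)$: bounding $(e^{\lambda_+ t}-e^{\lambda_- t})^2\le e^{2at}$ and $\sigma_1^2(t)-\hat\sigma^2(t)\le e^{2at}/(-2a)$ gives $W_2(\rho_1(t),\hat\rho_1(t))^2\le Ce^{2at}$.

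I expect the only genuinely delicate point to be part $(i)$. The naive bound on $(m_1-\hat m)^2$ is $O(k^2)$, which settles the $k\to 0$ limit but is not obviously $\le Ck$ uniformly over all $k>0$ and all $t>0$; the remedy is the pair of observations $x^2\le x$ on $[0,1]$ and $\sup_{t>0}te^{at}=1/(-ae)$, so that the smallness in $k$ dominates the polynomial-in-$t$ growth of the difference of exponentials. One must also be careful to fix the Fluctuation--Dissipation normalization of $\hat D$ correctly, since it is exactly this that forces $\hat\sigma^2(t)=\overline{\Sigma}_{11}(1-e^{2\lambda_+ t})$ and hence makes the cancellations in the second step go through and the two equilibria coincide. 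Everything else is bookkeeping parallel to the one-oscillator case.
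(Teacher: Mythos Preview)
Your proposal is correct and follows essentially the same route as the paper: compute $e^{t\mathbf{Q}}$ via Lemma~\ref{lem: exponential matrix}, read off the Gaussian marginal from Lemma~\ref{lem: Gaussian sol}, identify the reduced law as an Ornstein--Uhlenbeck Gaussian with equilibrium variance $\overline{\Sigma}_{11}$, and then bound $W_2^2$ by $(m_1-\hat m)^2+|\sigma_1^2-\hat\sigma^2|$ using the same elementary inequalities $1-e^{-u}\le u$ and $\sup_{t>0}te^{at}=1/(|a|e)$.

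The one place you differ slightly from the paper is in part~$(i)$: the paper bounds $|m_1-\hat m|\le k|x_1-x_2|/(|a|e)$, squares it to get $O(k^2)$, and then concludes $W_2^2\le Ck$ ``for any bounded $k$'', so the constant implicitly depends on an upper bound for $k$. Your extra observation that $e^{\lambda_+t}-e^{\lambda_-t}=e^{at}(1-e^{-2kt})\in[0,1]$, hence its square is dominated by itself, gives $(m_1-\hat m)^2\le \tfrac{(x_1-x_2)^2}{-2ae}\,k$ directly and yields a constant uniform over \emph{all} $k>0$. This is a small but genuine improvement over the paper's argument.
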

\begin{proof}
According to Lemma \ref{lem: Gaussian sol}, the solution to \eqref{eq: PDE} is given by
$
\rho(t,x_1,x_2)=\mathcal{N}(\mu(t), \Sigma(t)),
$
where 
$$
\mu(t)=e^{t\mathbf{Q}}\begin{pmatrix}
    x_1\\x_2
\end{pmatrix},\quad \Sigma(t)=2\int_0^t e^{s\mathbf{Q}} e^{s\mathbf{Q}^T}\,ds.
$$
Since $\mathbf{Q}\mathbf{Q}^T=\mathbf{Q}^T\mathbf{Q}$ and $\mathbf{Q}=\mathbf{Q}^T$, we have
$$
e^{s\mathbf{Q}} e^{s\mathbf{Q}^T}=e^{s(\mathbf{Q}+\mathbf{Q}^T)}=
e^{2s\mathbf{Q}}.
$$
Thus, we can simplify $\Sigma(t)$ as
\begin{equation*}
\Sigma(t)=2\int_0^t e^{2s\mathbf{Q}}\,ds.
\end{equation*}
Applying lemma \ref{lem: exponential matrix}, we compute 
\begin{align*}
 e^{t\mathbf{Q}}&= \frac{1}{\Delta}\begin{pmatrix}
    m_{11}& m_{12}\\ m_{21}& m_{22}
 \end{pmatrix}, \quad \Delta= 2kt,\\
 m_{11}&=m_{22}=e^{(a-k) t}\Delta \cosh{\frac{1}{2}\Delta}=\frac{1}{2}e^{(a-k) t}\Delta(e^{kt}+e^{-kt})=\frac{1}{2}\Delta (e^{(a-2k)t}+e^{at})\\
 m_{12}&=m_{21}=2kt e^{(a-k) t} \sinh{\frac{1}{2}\Delta}=\frac{1}{2}\Delta e^{(a-k)t}(e^{kt} -e^{-kt}).=\frac{1}{2}\Delta (e^{at} -e^{(a-2k)t}).
\end{align*}
Thus
$$
e^{t\mathbf{Q}}=\frac{1}{2}\begin{pmatrix}
e^{(a-2k)t}+e^{at}& e^{at} -e^{(a-2k)t}\\
e^{at} -e^{(a-2k)t}&e^{(a-2k)t}+e^{at}
\end{pmatrix}.
$$
Similarly
$$
e^{2t\mathbf{Q}}=\frac{1}{2}\begin{pmatrix}
e^{2(a-2k)t}+e^{2at}& e^{2at} -e^{2(a-2k)t}\\
e^{2at} -e^{2(a-2k)t}&e^{2(a-2k)t}+e^{2at}
\end{pmatrix}.
$$
Therefore,
$$
\Sigma(t)=2\int_0^t e^{2s Q}\,ds=\frac{1}{2}\begin{pmatrix}
 \frac{e^{2(a-2k)t}-1}{a-2k}+\frac{e^{2at}-1}{a}&\frac{e^{2at}-1}{a}- \frac{e^{2(a-2k)t}-1}{a-2k}  \\
 \frac{e^{2at}-1}{a}- \frac{e^{2(a-2k)t}-1}{a-2k}   &\frac{e^{2(a-2k)t}-1}{a-2k}+\frac{e^{2at}-1}{a}
\end{pmatrix}.
$$
It follows that   
$$
\rho_1(t)=\mathcal{N}(\mu_1(t),\Sigma_{11}(t))=\mathcal{N}\Bigg(\frac{1}{2}\Big((e^{(a-2k)t}+e^{at})x_1+( e^{at} -e^{(a-2k)t})x_2\Big),\frac{1}{2}\Big(
 \frac{e^{2(a-2k)t}-1}{a-2k}+\frac{e^{2at}-1}{a}\Big)\Bigg),
$$
Since $\hat{x}$ is an OU process, we obtain
$$
\hat{\rho}_1(t)=\mathcal{N}(\hat{\mu}_1(t),\hat{\Sigma}_{1}(t))=\mathcal{N}\Big(e^{\lambda_+ t}x_1,-\frac{\hat{D}}{\lambda_+}(1-e^{2\lambda_+ t})\Big)=\mathcal{N}\Big(e^{\lambda_+ t}x_1,\overline{\Sigma}_{11}(1-e^{2\lambda_+ t})\Big),
$$
recalling that, with $a=d$
$$
\lambda_+=\frac{(a+d-2k)+\sqrt{(a-d)^2+4k^2}}{2}=a,\quad \overline{\Sigma}_{11}=-\frac{1}{2}\Big(\frac{1}{a-2k}+\frac{1}{a}\Big).
$$
The Wasserstein distance between $\rho_1$ and $\hat{\rho}_1$ is given by
\begin{equation}
    W_2(\rho_1(t),\hat{\rho}_1(t))^2=(\mu_1(t)-\hat{\mu}_1(t))^2+\Big(\sqrt{\Sigma_{11}}(t)-\sqrt{\hat{\Sigma}_{1}}(t)\Big)^2
\end{equation}
$(i)$ We compute
\begin{align}
|\mu_1(t)-\hat{\mu}_1(t)|&=\frac{1}{2}\Big|\Big(e^{(a-2k)t}+e^{at})x_1+( e^{at} -e^{(a-2k)t})x_2\Big)-e^{at}x_1\Big|\notag
\\&=\frac{1}{2}|(e^{at}-e^{(a-2k)t})(x_2-x_1)|\notag
 \\&=  \frac{1}{2}e^{at}|x_2-x_1|(1-e^{-2kt})\label{mean}
 \\& \leq |x_2-x_1|k e^{at}t \notag
 \\& \leq k\,|x_2-x_1| \frac{1}{|a| e},\notag
\end{align}
where in the first inequality we have used the elementary inequality $1-e^{-x}\leq x$ for all $x>0$, and in the last inequality we have used (noting that $a<0$)
\begin{equation}
\label{eq:ineq}
\max_{t>0}t e^{at}=\frac{1}{|a| e}.
\end{equation}
We also estimate
\begin{align}
\Sigma_{11}(t)-\hat{\Sigma}_{1}(t)&=\frac{1}{2}\Bigg[\frac{1}{a-2k}\Big(e^{2(a-2k)t}-e^{2\lambda_+ t}\Big)+\frac{1}{a}\Big(e^{2at}-e^{2\lambda_+ t}\Big)\Bigg]\notag
\\&=\frac{1}{2}\frac{1}{a-2k}\Big(e^{2(a-2k)t}-e^{2a t}\Big)\notag
\\&=\frac{1}{2}\frac{1}{2k-a}e^{2at}\Big(1-e^{-4kt}\Big)\label{diff}
\\&\leq \frac{2k}{2k-a}e^{2at}t\notag
\\& \leq \frac{k}{a^2 e},\notag
\end{align}
where to go from \eqref{diff} to the next line, we have used $1-e^{-4kt}\leq 4kt$ and \eqref{eq:ineq} again (with $a$ replaced by $2a$).
Therefore, we have
\begin{align*}
   W_2(\rho_1(t),\hat{\rho}_1(t))^2&\leq (\mu_1(t)-\hat{\mu}_1(t))^2+\Big|\Sigma_{11}(t)-\overline{\Sigma}_{1}(t)\Big| 
   \\& \leq  k^2\,|x_2-x_1|^2 \frac{1}{|a|^2 e^2}+ \frac{k}{a^2 e}
   \leq C k,
\end{align*}
for any bounded $k$. 

$(iii)$
Since $a<0$,
$$
\lim_{t\rightarrow\infty}\mu_1(t)=\lim_{t\rightarrow \infty}\hat{\mu}_1(t)=0,\quad
\lim_{t\rightarrow\infty}\Sigma_{11}(t)=-\frac{1}{2}\Big(\frac{1}{a-2k}+\frac{1}{a}\Big)=\overline{\Sigma}_{11}.
$$
it implies that
$$
\lim_{t\rightarrow \infty} \rho_1(t)=\lim_{t\rightarrow \infty} \hat{\rho}_1(t)=\rho_\infty=\mathcal{N}(0,\overline{\Sigma}_{11}).
$$
We can also compute explicitly the rates of convergence of these limits in the Wasserstein distance. We have
\begin{align}
\label{1}
 W_2(\rho_1(t),\rho_\infty)^2=\mu_1(t)^2+\Big(\sqrt{\Sigma_{11}}(t)-\sqrt{\overline{\Sigma}}\Big)^2   \leq \mu_1(t)^2+\Big|\Sigma_{11}(t)-\overline{\Sigma}_{11}\Big|.
\end{align}
We estimate each term on the right hand side of \eqref{1}. For the first term, we get
\begin{align}
\label{2}
 \mu_1(t) &=\frac{1}{2}\Big((e^{(a-2k)t}+e^{at})x_1+( e^{at} -e^{(a-2k)t})x_2\Big)\notag
 \\&=\frac{1}{2}e^{at}\Big((1+e^{-2kt})x_1+(1-e^{-2kt})x_2\Big)\notag
 \\& \leq C e^{at}.
\end{align}
For the second term, we have
\begin{align}
\label{3}
 |\Sigma_{11}(t)-\overline{\Sigma}_{11}|=\frac{1}{2}\Big|\frac{e^{2(a-2k)t}}{a-2k}+\frac{e^{2at}}{a}\Big|=\frac{1}{2}e^{2at}\Big|\frac{1}{a}+\frac{e^{-4kt}}{a-2k}\Big|\leq C e^{2at}.   
\end{align}
Substituting \eqref{2} and \eqref{3} to \eqref{1}, we obtain
$$
W_2(\rho_1(t),\rho_\infty)\leq C e^{at},
$$
thus $\rho_1$ exponentially converges, with a rate $a$, to $\rho_\infty$. Similarly,
\begin{align*}
 W_2(\hat{\rho}_1(t),\rho_\infty)^2&=\hat{\rho}_1(t)^2+\Big(\sqrt{\hat{\Sigma}_1}(t)-\sqrt{\overline{\Sigma}}_{11}\Big)^2
 \\&\leq \hat{\rho}_1(t)^2+\Big|\hat{\Sigma}_1(t)-\overline{\Sigma}_{11}\Big|
 \\& =(x_1^2 +\overline{\Sigma}_{11}) e^{2at}.
\end{align*}
Hence $\hat{\rho}_1$ exponentially converges with the same rate  $a$ to $\rho_\infty$.

$(iv)$ According to \eqref{mean} and \eqref{diff} we have
\begin{align*}
&|\mu_1(t)-\hat{\mu}_1(t)|=  \frac{1}{2}e^{at}|x_2-x_1|(1-e^{-2kt})\leq C e^{at},
\\& |\Sigma_{11}(t)-\hat{\Sigma}_{1}(t)|=\frac{1}{2}\frac{1}{|2k-a|}e^{2at}\Big(1-e^{-4kt}\Big)\leq C e^{2at}.
\end{align*}
Thus
\begin{equation*}
 W_2(\rho_1(t),\hat{\rho}_1(t))^2\leq (\mu_1(t)-\hat{\mu}_1(t))^2+|\Sigma_{11}(t)-\hat{\Sigma}_{1}(t)|\leq C e^{2at},
\end{equation*}
that is 
$$
W_2(\rho_1(t),\hat{\rho}_1(t))\leq C e^{at}.$$
This completes the proof of this theorem. We remark that we have assumed deterministic initial data, but the theorem can also be extended to the case where the initial data follow symmetric distributions as in Section \ref{sec:sec3}.
\end{proof}

\section{Summary and outlook}
\label{sec:sec5}
In this work we have employed the reduction scheme recently introduced in \cite{CM22,CDM22}, \AM{which} suitably combines the Invariant Manifold method \AM{ with} the Fluctuation-Dissipation relation, to derive a contracted description for two classical models of statistical physics, namely  the underdamped Brownian harmonic oscillator and a system of two coupled overdamped Brownian harmonic oscillators. The present work significantly extends the previous results: we succeeded here to quantify explicitly the error between the original and the reduced dynamics, as well as their rates of convergence to equilibrium. The technical tool we used is the Wasserstein distance, which is widely employed in the theory of optimal transport. We have thus shown that the two dynamics are exponentially close at any time, share the same equilibrium measure, and exponentially converge to the \AM{same} equilibrium measure with the same rate. Furthermore, \AM{the two dynamics} are also found to coincide if the relevant parameter controlling the time-scale separation of the \AM{original} model is sent to infinity. The linearity of the considered models has clearly played an important role in the analysis of this work, enabling the explicit computations of their solutions and of the involved Wasserstein distances. A key challenge for future developments is to generalize \AM{our analysis in order to deal with} non-linear models, where explicit solutions and computations are not accessible. Another direction of research points toward the investigations of systems with a large numbers of degrees of freedom, e.g. models relevant to climate dynamics \cite{hummel2023reduction}, or small systems of interest in modern nanotechnologies, such as biomolecular motors \cite{Tang16}.

\section*{Acknowledgements}
MC's research was performed under
the auspices of Italian National Group of Mathematical Physics (GNFM) of INdAM. MHD research was supported by EPSRC grants EP/W008041/1 and EP/V038516/1.

\bibliographystyle{alpha}
\bibliography{biblio}
\end{document}